\let\origsection=\section \def\section{\@ifstar{\origsection*}{\mysection}} 
\def\mysection{\@startsection{section}{1}\z@{.7\linespacing\@plus\linespacing}{.5\linespacing}{\normalfont\scshape\centering\S}}
\renewcommand{\PrintDOI}[1]{\doi{#1}}
\numberwithin{equation}{section}
\numberwithin{figure}{section}
\def\nlabel{\upshape({\itshape \arabic*\,})}
\let\polishlcross=\l
\def\l{\ifmmode\ell\else\polishlcross\fi}
\def\paragraph#1{%
  \noindent\textbf{#1.}\enspace}
\let\setminus=\smallsetminus
\let\sm=\setminus
\def\moverlay{\mathpalette\mov@rlay}
\def\mov@rlay#1#2{\leavevmode\vtop{   \baselineskip\z@skip \lineskiplimit-\maxdimen
   \ialign{\hfil$\m@th#1##$\hfil\cr#2\crcr}}}
\newcommand{\charfusion}[3][\mathord]{
    #1{\ifx#1\mathop\vphantom{#2}\fi
        \mathpalette\mov@rlay{#2\cr#3}
      }
    \ifx#1\mathop\expandafter\displaylimits\fi}
\DeclareFontFamily{U}  {MnSymbolC}{}
\DeclareSymbolFont{MnSyC}         {U}  {MnSymbolC}{m}{n}
\DeclareFontShape{U}{MnSymbolC}{m}{n}{
    <-6>  MnSymbolC5
   <6-7>  MnSymbolC6
   <7-8>  MnSymbolC7
   <8-9>  MnSymbolC8
   <9-10> MnSymbolC9
  <10-12> MnSymbolC10
  <12->   MnSymbolC12}{}
\DeclareMathSymbol{\powerset}{\mathord}{MnSyC}{180}
\let\epsilon=\varepsilon
\let\rho=\varrho
\let\theta=\vartheta
\let\kappa=\varkappa
\theoremstyle{plain}
\newtheorem{thm}{Theorem}[section]
\newtheorem{prop}[thm]{Proposition}
\newtheorem{lem}[thm]{Lemma}
\theoremstyle{definition}
\newtheorem{remark}[thm]{Remark}
\newtheorem{dfn}[thm]{Definition}
\newtheorem{eg}[thm]{Example}
\let\phi=\varphi
\DeclareMathOperator{\cf}{cf}
\DeclareMathOperator{\col}{col}
\newcommand{\pomega}{\varpi}
\DeclareSymbolFont{stmry}{U}{stmry}{m}{n}
\DeclareMathSymbol\arrownot\mathrel{stmry}{"58}
\DeclareMathSymbol\Arrownot\mathrel{stmry}{"59}
\def\longarrownot{\mathrel{\mkern5.5mu\arrownot\mkern-5.5mu}}
\def\nlra{\longarrownot\longrightarrow}
\begin{document}

\title[The colouring number of infinite graphs]{The colouring number of infinite graphs}

\author[Nathan Bowler]{Nathan Bowler}
\address{Fachbereich Mathematik, Universit\"at Hamburg,
  Bundesstra\ss{}e~55, D-20146 Hamburg, Germany}
\email{Nathan.Bowler@uni-hamburg.de}

\author[Johannes Carmesin]{Johannes Carmesin}
\address{Department of Pure Mathematics and Mathematical Statistics,
University of Cambridge,
Wilberforce Road, Cambridge CB3 0WB}
\email{j.carmesin@dpmms.cam.ac.uk}

\author[P\'eter Komj\'ath]{P\'eter Komj\'ath}
\address{E\"tv\"s Lor\'and University, H-1053 Budapest, Egyetem t\'er 1-3}
\email{kope@cs.elte.hu}

\author[Christian Reiher]{Christian Reiher}
\address{Fachbereich Mathematik, Universit\"at Hamburg,
  Bundesstra\ss{}e~55, D-20146 Hamburg, Germany}
\email{Christian.Reiher@uni-hamburg.de}

\begin{abstract} 
	We show that, given an infinite cardinal $\mu$, a graph has colouring number at most 
	$\mu$ if and only if it contains neither of two types of subgraph. We also show that 
	every graph with infinite colouring number has a well-ordering of its vertices that 
	simultaneously witnesses its colouring number and its cardinality.
\end{abstract}

\maketitle

\setcounter{footnote}{1}

\section{Introduction}
Our point of departure is a recent article of the third author~\cite{Kom} one of whose results 
addresses infinite graphs with infinite colouring number. Let us recall this notion 
introduced by Erd\H{o}s and Hajnal in~\cite{DefOfColNo}.

\begin{dfn}
\label{dfn:col}
	The {\it colouring number} $\col(G)$ of a graph $G=(V, E)$ is the smallest 
	cardinal~$\kappa$ such that there exists a well-ordering $<^*$ of $V$ with 
	\[
		|N(v)\cap\{w\colon w<^*v\}|<\kappa\qquad\text{for all $v\in V$}\,,
	\]
	where $N(v)$ is the set of neighbours of $v$. We call such well-orderings {\em good}.
\end{dfn}

The result of~\cite{Kom} is that if the colouring number of a graph $G$ is bigger than 
some infinite cardinal $\mu$, then $G$ contains either a $K_\mu$, i.e., $\mu$ mutually 
adjacent vertices, or~$G$ contains for each positive integer $k$ an induced copy of the 
complete bipartite graph $K_{k,k}$. This condition is not a characterisation: there are 
graphs, such as $K_{\omega, \omega}$, which have small colouring number but nevertheless 
include an induced $K_{k,k}$ for each $k$.

Since having colouring number $\le \mu$ is closed not only under taking induced subgraphs 
but even under taking subgraphs, it seems easier to look first for a characterisation in 
terms of forbidden subgraphs. Our main result is that there is indeed a transparent 
characterisation of ``having colouring number $\le\mu$'' in terms of forbidden subgraphs. 
For some explicit graphs called {\it $\mu$-obstructions}, to be introduced in 
Definition~\ref{obstr} below, we shall prove the following.

\begin{thm}\label{col-char}
	Let $G$ be a graph and let $\mu$ denote some infinite cardinal. Then the statement 
	$\col(G)>\mu$ is equivalent to $G$ containing some $\mu$-obstruction as a subgraph.
\end{thm}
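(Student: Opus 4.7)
I plan to prove the two implications separately. The direction ``contains an obstruction implies $\col(G) > \mu$'' should be the routine half. Colouring number is monotone under subgraphs, since restricting a good well-ordering of $G$ to the vertex set of any subgraph $H \subseteq G$ yields a good well-ordering of $H$. It therefore suffices to verify, for each individual graph $H$ appearing in Definition~\ref{obstr}, that $\col(H) > \mu$; this is a direct check against whichever structural feature of~$H$ forces every well-ordering of $V(H)$ to place some vertex with at least $\mu$ back-neighbours.

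For the substantive direction, I would assume $\col(G) > \mu$ and begin with a transfinite peeling procedure. Set $V_0 = V(G)$ and, at each stage $\alpha$, if some $v \in V_\alpha$ has fewer than $\mu$ neighbours in $V_\alpha$, let $V_{\alpha+1} = V_\alpha \setminus \{v\}$; at limits take intersections; stop when no such $v$ exists. Let $W$ be the remainder. If $W$ were empty, the reverse of the elimination order would be a good well-ordering witnessing $\col(G) \leq \mu$, contradicting the hypothesis. Hence $W \neq \emptyset$, and the induced subgraph $G[W]$ is \emph{$\mu$-thick} in the sense that every vertex has at least $\mu$ neighbours inside~$W$.

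The heart of the argument is to locate an obstruction inside the $\mu$-thick graph $G[W]$. I would first attempt a greedy transfinite construction of a $K_\mu$: at stage $\alpha < \mu$, choose $v_\alpha \in W$ from the common neighbourhood of $\{v_\beta : \beta < \alpha\}$ inside $W$, maintaining the invariant that this common neighbourhood still has cardinality at least $\mu$. If the construction runs through all $\alpha < \mu$, we obtain $K_\mu$, which should either be or embed into the first type of $\mu$-obstruction. If the construction stalls at some stage, the precise reason it stalls — a partial clique together with a sparsity pattern among its common neighbours, all inside a $\mu$-thick graph — is exactly the raw material from which the second type of obstruction in Definition~\ref{obstr} should be built.

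The main obstacle, I expect, will be the non-clique case: converting a failure of the greedy construction into a concrete obstruction of the prescribed shape will require additional pigeonholing, plausibly a $\Delta$-system or canonisation step, so that the witnesses of failure at different stages have a uniform form rather than varying arbitrarily. This becomes particularly delicate when $\mu$ is singular, where one probably has to argue separately along a cofinal sequence of regular cardinals below $\mu$ and then reassemble; and throughout one must be careful that every choice made in constructing the obstruction remains inside $W$, since outside of $W$ the $\mu$-thickness — the only property surviving the hypothesis $\col(G) > \mu$ — is no longer available.
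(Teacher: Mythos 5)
Your easy direction is fine in outline, though the verification for type~II obstructions is not the routine degree count you suggest: one has to apply Fodor's Lemma to a regressive choice of a ``high'' back-neighbour along the stationary set $T_G$, which is a genuinely different argument from the counting one uses for type~I.

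The substantive direction has a real gap right at the first step. You peel vertices of small degree and claim that if the residue $W$ is empty, then ``the reverse of the elimination order'' is a good well-ordering. But the reverse of a transfinite well-order of length $\geq\omega$ is not a well-order at all, so this cannot witness $\col(G)\leq\mu$. More seriously, the underlying claim --- that $\col(G)>\mu$ forces $G$ to have a non-empty $\mu$-thick subgraph --- is false, and the paper's type~II obstructions are precisely the phenomenon that breaks it. A type~II obstruction lives on a regular $\kappa>\mu$ and only requires each $\alpha$ in a stationary set $T_G$ to have a \emph{cofinal} set of $\mu$ back-neighbours of order type $\mu$; there is no requirement that any vertex have $\mu$-many \emph{forward} neighbours, nor that the back-neighbours themselves be rich. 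If the ladder sets $N(\alpha)\cap\alpha$ are chosen so that every vertex appears in few of them and lies outside $T_G$, the whole graph peels away to nothing, yet $\col>\mu$ by the Proposition. So conflating ``colouring number $>\mu$'' with ``contains a $\mu$-thick subgraph'' is exactly the finite-graph intuition (degeneracy) that fails transfinitely, and the theorem you are trying to prove is in part a replacement for that false equivalence.

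The later steps compound the problem. $K_\mu$ is \emph{not} a $\mu$-obstruction (it has only $\mu$ vertices, whereas both obstruction types need at least $\mu^+$), so a successful greedy construction of $K_\mu$ inside $W$ does not finish the job, and ``embeds into a type~I obstruction'' is not a valid substitute for $G$ actually containing one. And the claim that a stalled greedy clique construction is ``the raw material'' for a type~II obstruction is not supported: type~II obstructions are not generated by sparse clique failures but by a \emph{stationary} set on a regular cardinal together with Fodor-style pressing-down, neither of which appears anywhere in your proposal. The paper's actual argument takes a quite different shape: one passes to a minimal-cardinality subgraph still of colouring number $>\mu$, then splits on whether that cardinality $\kappa$ is regular or singular. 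In the regular case one works with a club $E$ and a stationary set $T$ of ``bad'' ordinals and applies Fodor's Lemma (via the auxiliary notion of a $\mu$-ladder) to extract a type~I or type~II obstruction; in the singular case one runs a Shelah-style singular-compactness argument using ``robust'' sets and concatenates good well-orderings over a filtration. I'd suggest internalising that the two obstruction types already tell you what machinery must appear: barricade-style counting and $\Delta$-system/cardinality arguments for type~I, and clubs, stationarity, and Fodor for type~II.
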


This result will also appear in the upcoming book~\cite{KomBook} of the third author.
The proof we describe has an interesting consequence.

\begin{thm}\label{gwo}
	Every graph $G$ whose colouring number is infinite possesses a good well-ordering 
	of length $|V(G)|$.
\end{thm}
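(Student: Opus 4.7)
Write $\kappa := \col(G)$ and $\lambda := |V(G)|$. Because any well-ordering of $V(G)$ of length the initial ordinal of $\lambda$ gives every vertex fewer than $\lambda$ predecessors, we have $\kappa\le\lambda$; and when $\kappa = \lambda$ every such well-ordering is automatically good, so nothing needs to be done. My plan is therefore to focus on the case $\kappa<\lambda$, and to start from a good well-ordering $\prec$ of $V(G)$ (provided by the definition of $\col$) together with an auxiliary bookkeeping well-ordering $<$ of $V(G)$ whose length already is the initial ordinal of $\lambda$. From these two orderings I will splice a new good well-ordering $\prec^\ast$ of length exactly $\lambda$.

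The construction is a transfinite recursion for $\alpha<\lambda$. Writing $T_\alpha = \bigsqcup_{\beta<\alpha}S_\beta$ for what has already been placed and using $N^-_\prec(v)$ for the set of $\prec$-earlier neighbours of $v$, at each stage $\alpha$ I would let $u_\alpha$ be the $<$-least element of $V(G)\setminus T_\alpha$ and take $S_\alpha$ to be the smallest subset of $V(G)\setminus T_\alpha$ that contains $u_\alpha$ and is closed under the operation $v\mapsto N^-_\prec(v)\setminus T_\alpha$. The ordering $\prec^\ast$ then lists the blocks $S_0,S_1,\ldots$ in that order, putting an arbitrary well-ordering of order type $|S_\alpha|$ inside each block. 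Since $|N^-_\prec(v)|<\kappa$ for every $v$, iterating the closure $\omega$-many times starting from a single vertex yields $|S_\alpha|\le\kappa$; the bookkeeping forces the $<$-least unplaced vertex to enter $S_\delta$ at stage $\delta$, so every vertex is placed at some stage $<\lambda$; and the crude bound $|T_\alpha|\le\kappa\cdot|\alpha|<\lambda$ keeps the recursion alive until $\alpha=\lambda$ while making the ordinal sum $\sum_{\alpha<\lambda}|S_\alpha|$ collapse to $\lambda$ via $\kappa\cdot\lambda = \lambda$.

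The step I expect to be the main obstacle is the verification that $\prec^\ast$ is good. A vertex $v\in S_\alpha$ has its $\prec^\ast$-earlier neighbours split into those inside $S_\alpha$ (strictly fewer than $|S_\alpha|\le\kappa$ of them) and those inside $T_\alpha$, and for the latter I need $|N(v)\cap T_\alpha|<\kappa$. The whole purpose of the closure defining $S_\alpha$ is to rule out any neighbour of $v$ in $T_\alpha$ being $\prec$-later than $v$: were there $w\in T_\alpha\cap N(v)$ with $v\prec w$, then $w$ would lie in some $S_\beta$ with $\beta<\alpha$, and since $v\in N^-_\prec(w)$ the closure property of $S_\beta$ would force $v\in S_\beta\subseteq T_\alpha$, contradicting $v\in S_\alpha$. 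Hence $|N(v)\cap T_\alpha|\le|N^-_\prec(v)|<\kappa$, and therefore $v$ has strictly fewer than $\kappa$ $\prec^\ast$-earlier neighbours in total, as required.
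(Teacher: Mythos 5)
Your proof is correct, and it takes a genuinely different, considerably more elementary route than the paper. The paper obtains \autoref{gwo} as a corollary of the stronger trichotomy in \autoref{dicho}, whose proof relies on the $\mu$-obstruction machinery, a stationary-set/Fodor argument when $|V(G)|$ is regular, and a Shelah-style singular-compactness argument with ``robust sets'' when it is singular. Your argument avoids all of that by exploiting the one feature special to the case $\mu=\col(G)$: a good well-ordering $\prec$ already exists by definition, so the task is purely one of re-ordering it to have the correct length. The closure operation defining $S_\alpha$ is exactly the right one: since each $|N^-_\prec(v)|<\kappa$, an $\omega$-fold iteration gives $|S_\alpha|\le\kappa$, and together with $|T_\alpha|\le|\alpha|\cdot\kappa<\lambda$ this pins the order type of the concatenation to $\lambda$; and the closure property guarantees that any neighbour of $v\in S_\alpha$ lying in an earlier block must already be a $\prec$-earlier neighbour of $v$, so the across-block contribution is at most $|N^-_\prec(v)|<\kappa$, while the within-block contribution is strictly below $|S_\alpha|\le\kappa$ because you ordered each block in order type $|S_\alpha|$. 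The trade-off is scope: your argument establishes only \autoref{gwo}, whereas the paper's route through \autoref{dicho} simultaneously yields the characterisation \autoref{col-char}. As a stand-alone proof of \autoref{gwo}, however, yours is shorter and self-contained.
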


It is not hard to re-obtain the result mentioned above from our characterisation,
Theorem~\ref{col-char}, by inspecting whether the $\mu$-obstructions satisfy it. 
In fact, one can easily deduce the following strengthening.

\begin{thm}\label{strong-Komjath}
	If $G$ is a graph with $\col(G)>\mu$, where $\mu$ denotes some infinite cardinal, 
	then~$G$ contains either a $K_\mu$ or, for each positive integer $k$, 
	an induced $K_{k, \omega}$.
\end{thm}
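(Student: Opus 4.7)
The natural strategy is to reduce immediately to the $\mu$-obstructions via Theorem~\ref{col-char}: if $\col(G)>\mu$, then $G$ contains a $\mu$-obstruction $H$ as a subgraph, so it suffices to prove the dichotomy for $H$ in place of $G$. Since two types of $\mu$-obstructions are introduced in Definition~\ref{obstr}, the argument should split into two cases according to which type~$H$ is.

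For the ``clique-type'' obstruction, one expects the structure essentially to be (or trivially to contain) a $K_\mu$, so the first alternative of the conclusion is immediate. The real work is in the ``bipartite-type'' obstruction, which encodes a family of vertices that, together with their neighbourhoods, obstructs every attempted good well-ordering at some fixed scale. Concretely, such an obstruction should provide a set $A$ of vertices together with a large external family $F$ of vertices each of which is adjacent to ``many'' elements of $A$ in a controlled way.

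Fix $k\in\Nbb$. The plan is: first apply a pigeonhole / $\Delta$-system refinement to the finite trace $N(f)\cap A$ (or its relevant finite part) as $f$ ranges over $F$, obtaining an infinite subfamily $F_0\subseteq F$ and a fixed $k$-set $\{a_1,\dots,a_k\}\subseteq A$ such that every $f\in F_0$ is adjacent to each of $a_1,\dots,a_k$. Next, since we may assume no $K_\mu$ is present (otherwise the first alternative holds), thin $F_0$ once more to an infinite independent subset $F_1$; the absence of a $K_\mu$, combined with the assumption that $\mu$ is infinite, should make this Ramsey-style thinning routine. Finally, further remove from $F_1$ the countably many vertices (if any) that have unwanted extra adjacencies into $\{a_1,\dots,a_k\}^c\cap A$-neighbourhood conflicts, and also ensure $\{a_1,\dots,a_k\}$ is independent by an initial clean-up using again the no-$K_\mu$ assumption. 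The resulting configuration is an induced $K_{k,\omega}$.

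The main obstacle, and the reason the strengthening from $K_{k,k}$ to $K_{k,\omega}$ is not purely formal, lies in upgrading one side of the bipartite subgraph from finite to countably infinite while keeping the copy \emph{induced}. This requires the bipartite-type obstruction to carry more than finite ``width'' on the $F$-side; one should check that the definition in Definition~\ref{obstr} indeed supplies arbitrarily large, and in fact infinite, families $F$ with uniform neighbourhood behaviour, so that the $\Delta$-system refinement lands in an infinite, and not merely arbitrarily large finite, subfamily. Once this is in place, the two independence conditions (within $\{a_1,\dots,a_k\}$ and within the $\omega$-side) follow from two applications of the hypothesis that $G$ contains no $K_\mu$.
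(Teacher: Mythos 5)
Your proposal has a serious structural misunderstanding of the two obstruction types, and it leaves out one of the two genuine cases entirely.

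You write that one of the two obstruction types is a ``clique-type'' obstruction that ``essentially is (or trivially contains) a $K_\mu$'', so that the first alternative is immediate. This is not what Definition~\ref{obstr} says. A type~I obstruction is a bipartite graph on $(A,B)$ with $|A|=\lambda\geq\mu$, $|B|=\lambda^+$ and degree conditions; a type~II obstruction is a graph on a regular $\kappa>\mu$ with a stationary set $T_G$ of vertices whose back-neighbourhoods have order type $\mu$ and are cofinal. Neither of these contains a $K_\mu$ in general --- in fact a type~I obstruction is bipartite and triangle-free if you like. The $K_\mu$ alternative in the theorem does not come from the obstruction being a clique; it comes from a Ramsey-theoretic dichotomy applied to the big neighbourhoods inside an obstruction. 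Concretely, the paper invokes the Dushnik--Erd\H{o}s--Miller theorem $\lambda\longrightarrow(\lambda,\omega)$: any set of $\geq\mu$ neighbours either contains a $K_\mu$ (done) or contains an independent $k$-set. Your ``Ramsey-style thinning should be routine'' and ``$\Delta$-system refinement'' gestures do not identify this tool, and without it your appeal to ``no $K_\mu$, $\mu$ infinite'' does not produce infinite independent sets at uncountable cardinalities. (Also, the ``finite trace $N(f)\cap A$'' you propose to refine is not finite --- it has size at least $\mu$ by the definition of type~I.)

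The second gap is that the type~II obstruction is never handled. Having mislabelled it as a clique, you skip it, but it requires a genuinely different argument: one fixes, via DEM, an independent $k$-set $Y_\alpha\subseteq N(\alpha)\cap\alpha$ for each $\alpha$ in the stationary set $T_G$, then uses Fodor's pressing-down lemma $k$ times to stabilise the $i$-th elements of the $Y_\alpha$ on a stationary subset $S$, and finally applies DEM once more to $S$ to extract an infinite independent set for the $\omega$-side. None of this appears in your proposal. For the type~I case, once you replace your vague refinement by DEM plus a cardinality pigeonhole over $k$-subsets of $A$, your outline essentially matches the paper's; but the overall proof as written is incomplete and based on a misreading of the obstruction classification.
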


We will also give an example in Section~\ref{sec:nec} demonstrating that the conclusion 
cannot be improved further to the presence of an induced $K_{\omega, \omega}$. Which 
complete bipartite graphs exactly one gets by this approach depends on which properties 
the relevant cardinals have in the partition calculus. 

For standard set-theoretical background we refer to Kunen's textbook~\cite{Kunen}.

\section{Obstructions}\label{sec:obstr}

Throughout this section, we fix an infinite cardinal $\mu$. There are two kinds of 
$\mu$-obstructions relevant for the condition $\col(G)>\mu$ in Theorem~\ref{col-char}. 
They are introduced next.
  
\begin{dfn}\label{obstr}
	(1) A {\it $\mu$-obstruction of type I} is a bipartite graph $H$ with  bipartition $(A, B)$ 
	such that for some cardinal $\lambda\ge\mu$ we have 
	\begin{enumerate}
		\item[$\bullet$] $|A| = \lambda$, $|B| = \lambda^+$,  
		\item[$\bullet$] every vertex of $B$ has at least $\mu$ neighbours in $A$, and
		\item[$\bullet$] every vertex of $A$ has $\lambda^+$ neighbours in $B$.
	\end{enumerate} 

	\smallskip

	(2) Let $\kappa> \mu$ be regular, and let $G$ be a graph with $V(G)=\kappa$. 
	Define $T_G$ to be the set of those $\alpha\in \kappa$ with the following properties:
	\begin{enumerate}
		\item[$\bullet$] $\cf(\alpha)=\cf(\mu)$
		\item[$\bullet$] The order type of $N(\alpha)\cap \alpha$ is $\mu$. 
		\item[$\bullet$] The supremum of $N(\alpha)\cap \alpha$ is $\alpha$. 
	\end{enumerate} 
	If $T_G$ is stationary in $\kappa$, then $G$ is a {\it $\mu$-obstruction of type II}.
	We also call graphs isomorphic to such graphs $\mu$-obstructions of type II. 
\end{dfn}

Now we can directly proceed to the easier direction of Theorem~\ref{col-char}.

\begin{prop}
	If a graph $G$ has a $\mu$-obstruction of either type as a subgraph,
	then ${\col(G)>\mu}$.
\end{prop}

\begin{proof} 
	Suppose first that $G$ contains a $\mu$-obstruction of type $I$, say with 
	bipartition $(A, B)$ as in Definition~\ref{obstr} above, and $|A|=\lambda\ge\mu$. 
	Assume for a contradiction that there is a good well-ordering of $G$.
	Thus every $b\in B$ has a neighbour in $A$ above it in that well-ordering.
	For~$a\in A$, we denote by $X_a$ the set of those neighbours of $a$ that are below $a$ 
	in the well-ordering. Hence $B=\bigcup_{a\in A} X_a$. Since all the $X_a$ have size less 
	than $\mu$, we deduce that~$|B|\leq \lambda$, which is the desired contradiction.

	In the second case, we may without loss of generality assume that $G$ itself is 
	an obstruction of type~II. Again we suppose for a contradiction that there is a good 
	well-ordering $<^*$ of~$V(G)$. 
	Notice that each $\alpha\in T_G$ has a neighbour $\beta<\alpha$ such that $\alpha<^*\beta$. 
	Let  $f \colon T_G \longrightarrow \kappa$ be a function sending each $\alpha$ to some 
	such~$\beta$. By Fodor's Lemma, there must be some $\beta < \kappa$ such that 
	\[
		T = \{\alpha \in T_G \colon f(\alpha) = \beta\}
	\]
	is stationary. 
	Now every element of $T$ is a neighbour of $\beta$, and $\beta$ comes after $T$ 
	in the ordering~$<^*$, which in view of $|T|=\kappa>\mu$ contradicts our assumption 
	that this ordering is good.
\end{proof} 

We say that a graph is {\it $\mu$-unobstructed} if it has no $\mu$-obstruction of either 
type as a subgraph. To complete the proof of Theorem~\ref{col-char} we still need to show 
that every $\mu$-unobstructed graph~$G$ satisfies $\col(G)\le\mu$. This will be the objective 
of Sections~\ref{sec:reg} and~\ref{sec:sing}.

In the remainder of this section, we prove two results asserting that in order to find an 
obstruction in a given graph $G$ it suffices to find something weaker.

\begin{dfn} 
	A {\it $\mu$-barricade} is bipartite graph with bipartition $(A, B)$ such that  
	\begin{enumerate}
	\item[$\bullet$] $|A|<|B|$, 
	\item[$\bullet$] and every vertex of $B$ has at least $\mu$ neighbours in $A$.
	\end{enumerate} 
\end{dfn}
 
\begin{lem}\label{barricade}
	If $G$ has a $\mu$-barricade as a subgraph, then it also has a $\mu$-obstruction 
	of type~I as a subgraph.
\end{lem}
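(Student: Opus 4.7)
The plan is to massage a given $\mu$-barricade into an obstruction of type I by pruning the low-degree vertices of $A$ together with their $B$-neighbours, using a minimality trick to guarantee that the outcome has the correct shape. Call a pair $(A', B')$ of disjoint subsets of $V(G)$ a $\mu$-\emph{pre-obstruction with parameter $\lambda$} if $\lambda \geq \mu$ is a cardinal, $|A'| = \lambda$, $|B'| = \lambda^+$, and every vertex of $B'$ has at least $\mu$ neighbours in $A'$. Such a pair is a $\mu$-obstruction of type I precisely when, in addition, every vertex of $A'$ has $\lambda^+$ neighbours in $B'$. Given the barricade $(A, B)$, observe that $|A| \geq \mu$ (since $B$ is nonempty and any $b \in B$ has $\geq \mu$ neighbours in $A$) and $|B| \geq |A|^+$, so setting $\lambda_0 = |A|$ and choosing any $B^* \se B$ with $|B^*| = \lambda_0^+$ exhibits a $\mu$-pre-obstruction; in particular, the collection of $\mu$-pre-obstructions in $G$ is nonempty.

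Next, I would choose a $\mu$-pre-obstruction $(A', B')$ in $G$ whose parameter $\lambda$ is minimal (possible since cardinals are well-ordered), and prune as follows. Let
\[
A_1 = \{a \in A' : |N(a) \cap B'| \geq \lambda^+\} \qquad\text{and}\qquad B_1 = B' \sm \bigcup_{a \in A' \sm A_1} (N(a) \cap B').
\]
There are at most $\lambda$ vertices in $A' \sm A_1$, each contributing at most $\lambda$ neighbours in $B'$, so $|B' \sm B_1| \leq \lambda$ and hence $|B_1| = \lambda^+$. By construction, every $b \in B_1$ has no neighbour in $A' \sm A_1$, so its $\geq \mu$ neighbours in $A'$ all lie in $A_1$; meanwhile every $a \in A_1$ retains at least $\lambda^+ - \lambda = \lambda^+$ of its neighbours inside $B_1$.

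If $|A_1| = \lambda$, then $(A_1, B_1)$ is a $\mu$-obstruction of type I and we are done. Otherwise $\lambda' := |A_1| < \lambda$; the degree condition at each $b \in B_1$ still forces $|A_1| \geq \mu$, hence $\lambda' \geq \mu$ and $\lambda'^+ \leq \lambda < \lambda^+ = |B_1|$, so for any $B_1' \se B_1$ of size $\lambda'^+$ the pair $(A_1, B_1')$ is a $\mu$-pre-obstruction whose parameter $\lambda' < \lambda$ contradicts the minimal choice of $\lambda$. The main delicate point is exactly this possibility that the pruning strictly shrinks $A'$; the minimality of $\lambda$ is the mechanism that rules it out and closes the argument in a single pass.
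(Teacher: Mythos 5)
Your proposal is correct and follows essentially the same route as the paper: fix a minimal-parameter barricade (trimmed to the right sizes), discard the vertices of $A$ with fewer than $\lambda^+$ neighbours on the $B$-side along with the $B$-vertices they touch, check that this removes only $\lambda$ elements of $B$, and use minimality to see the pruning did not shrink $A$. The paper phrases the last step as directly forcing $|A'|=\lambda$ from minimality rather than as a case split ending in a contradiction, but this is a cosmetic difference.
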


\begin{proof}
	Let $H$ with bipartition $(A, B)$ be a barricade which is a subgraph of $G$, 
	chosen so that $\lambda = |A|$ is minimal. 
	By deleting some vertices of $B$ if necessary, we may assume that~$B$ has cardinality 
	$\lambda^+$. 
	Let $A'$ be the set of $a \in A$ for which $N_{B}(a)$ is of size $\lambda^+$, 
	and let~$B'$ be the set of elements of $B$ with no neighbour in $A \setminus A'$. 
	By the definition of $A'$, there are at most $\lambda$ edges $ab$ with 
	$a \in A \setminus A'$ and $b \in B$. 
	So $B \setminus B'$ is of size at most $\lambda$.  
	It follows that~$B'$ has cardinality~$\lambda^+$. 
	In particular, the subgraph $H'$ of $H$ on $(A', B')$ is a barricade, so by minimality of 
	$|A|$ we have $|A'| = \lambda$. 
	Since by construction every vertex of $A'$ has $\lambda^+$ neighbours in $B$ and hence 
	in $B'$, the subgraph $H'$ is a $\mu$-obstruction of type~I.
\end{proof}

\begin{dfn}
	Let $\kappa>\mu$ be regular. A graph $G$ with set of vertices $\kappa$ is said to be a 
	{\it $\mu$-ladder} if there is a stationary set $T$ such that each $\alpha\in T$ has 
	at least $\mu$ neighbours in $\alpha$. Also, every graph isomorphic to such a graph 
	is regarded as a $\mu$-ladder.
\end{dfn}

\begin{lem}\label{ladder-enough}   
	Every graph containing a $\mu$-ladder is $\mu$-obstructed.
\end{lem}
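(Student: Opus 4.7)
The plan is to reduce to the case where $G$ is itself a $\mu$-ladder with vertex set $\kappa$ and witnessing stationary $T\subseteq\kappa$; since the obstructions produced will be subgraphs of this $G$, they will automatically be subgraphs of the original graph. For each $\alpha\in T$ I would consider
\[
f(\alpha)=\min\{\gamma\le\alpha:|N(\alpha)\cap\gamma|\ge\mu\}.
\]
The minimum is attained, because the function $\gamma\mapsto|N(\alpha)\cap\gamma|$ rises by at most one at each successor step and is the supremum of its earlier values at each limit, so it cannot cross the infinite threshold $\mu$ without already being $\ge\mu$ at the limit in question. Partitioning $T$ into $T_I=\{\alpha:f(\alpha)<\alpha\}$ and $T_{II}=\{\alpha:f(\alpha)=\alpha\}$, the $\kappa$-completeness of the club filter ensures that at least one of these is stationary.

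In the first case, $f\restric T_I$ is regressive, so Fodor's Lemma supplies a stationary $T'\subseteq T_I$ and a fixed $\beta_0<\kappa$ with $f\equiv\beta_0$ on $T'$. Each $\alpha\in T'$ has $\ge\mu$ neighbours inside $\beta_0$, and since a stationary set in a regular $\kappa$ has cardinality $\kappa$, the bipartite subgraph with parts $\beta_0$ and $T'$ satisfies $|\beta_0|<\kappa=|T'|$ and is a $\mu$-barricade. Lemma~\ref{barricade} then delivers a $\mu$-obstruction of type~I.

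In the second case, the plan is to show $T_{II}\subseteq T_G$, so that $G$ itself witnesses the type~II condition. For $\alpha\in T_{II}$, the fact that $|N(\alpha)\cap\gamma|<\mu$ for every $\gamma<\alpha$, combined with $|N(\alpha)\cap\alpha|\ge\mu$, immediately yields (a) $\sup(N(\alpha)\cap\alpha)=\alpha$ (else the neighbourhood fits inside some $\gamma+1<\alpha$) and (b) that the order type of $N(\alpha)\cap\alpha$ is exactly $\mu$ (a larger order type would give a $\mu$\supth{} element $n_\mu<\alpha$ with $\ge\mu$ predecessors, contradicting $f(\alpha)=\alpha$). The subtle point is (c) $\cf(\alpha)=\cf(\mu)$: the $\le$ direction follows since $N(\alpha)\cap\alpha$ is a cofinal subset of $\alpha$ of order type $\mu$, while the $\ge$ direction comes from slicing $N(\alpha)\cap\alpha$ along a $\cf(\alpha)$-cofinal sequence in $\alpha$, observing that each slice has cardinality $<\mu$ by $f(\alpha)=\alpha$ but the slices sum to $\mu$, forcing $\cf(\mu)\le\cf(\alpha)$. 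This last cardinal-arithmetic step is the main obstacle I expect; the remainder is standard pressing-down and a routine appeal to Lemma~\ref{barricade}.
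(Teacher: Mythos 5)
Your proposal is correct and follows essentially the same strategy as the paper: your function $f(\alpha)=\min\{\gamma\le\alpha : |N(\alpha)\cap\gamma|\ge\mu\}$ coincides exactly with the paper's choice (the supremum of the $\mu$ smallest neighbours of $\alpha$ below $\alpha$), and both arguments then split on whether $f$ is regressive on a stationary set (Fodor plus Lemma~\ref{barricade}) or not (verify $T_{II}\subseteq T_G$ directly). The only cosmetic difference is that you re-derive $\cf(\alpha)=\cf(\mu)$ by hand, whereas the paper reads it off immediately from $f(\alpha)$ being the limit of an increasing $\mu$-sequence.
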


\begin{proof}
	It suffices to prove that every $\mu$-ladder is $\mu$-obstructed. So let $G$ with 
	$V(G)=\kappa$ and the stationary set $T$ be as described in the previous definition. 
	For each $\alpha\in T$ we let the sequence $\langle \alpha_i\,|\,i<\mu\rangle$ enumerate 
	the $\mu$ smallest neighbours of 
	$\alpha$ in increasing order and denote the limit point of this sequence by $f(\alpha)$. 
	Clearly we have $f(\alpha)\le\alpha$ and $\cf\bigl(f(\alpha)\bigr)=\cf(\mu)$ for all 
	$\alpha\in T$. 

	Let us first suppose that the set 
	\[
		T'=\{\alpha\in T \colon f(\alpha)<\alpha\}
	\]
	is stationary in $\kappa$. Then for some $\gamma<\kappa$ the set
	\[
		B=\{\alpha\in T'\colon f(\alpha)=\gamma\}
	\]
	is stationary and as $|\gamma|<\kappa=|B|$ the pair $(\gamma, B)$ is a $\mu$-barricade 
	in $G$. Due to Lemma~\ref{barricade} it follows that $G$ contains a $\mu$-obstruction 
	of type~$I$. 

	So it remains to consider the case that 
	\[
		T''=\{\alpha\in T\colon f(\alpha)=\alpha\}
	\]
	is stationary in $\kappa$. In that case we have 
	$N(\alpha)\cap\alpha=\{\alpha_i \colon i<\mu\}$ 
	for all $\alpha\in T''$. So $T_G$ is a superset of $T''$ 
	and thus stationary, meaning that $G$ is a $\mu$-obstruction of type~II.
\end{proof}

\section{Regular \texorpdfstring{$\kappa$}{kappa}}
\label{sec:reg}
In this and the next section we shall prove the harder part of Theorem~\ref{col-char}, 
in such a way that Theorem~\ref{gwo} is also immediate. To this end we shall show

\begin{thm}\label{dicho}
	Let $G$ denote an infinite graph of order $\kappa$ and let $\mu$ be an infinite cardinal.
	Then at least one of the following three cases occurs:
	\begin{enumerate}
		\item[$\bullet$] $G$ has a subgraph $H$ with $|V(H)| < |V(G)|$ and $\col(H)>\mu$.
		\item[$\bullet$] $G$ is $\mu$-obstructed.
		\item[$\bullet$] $G$ has a good well-ordering of length $\kappa$ exemplifying 
			$\col(G)\le\mu$.
	\end{enumerate}
\end{thm}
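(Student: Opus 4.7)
The plan is to run a greedy transfinite recursion that either succeeds and gives outcome~(iii), or fails in a way that exposes a $\mu$-barricade, from which Lemma~\ref{barricade} extracts a type~I obstruction and hence outcome~(ii). The degenerate case $\kappa \le \mu$ is disposed of first: any well-ordering of $V(G)$ in order type~$\kappa$ makes each vertex at position~$\alpha$ have back-degree at most $|\alpha| < \kappa \le \mu$, witnessing outcome~(iii) immediately. From now on assume $\kappa > \mu$; this section treats the subcase where $\kappa$ is regular, and the singular subcase is deferred to Section~\ref{sec:sing}.

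Define $v_\alpha$ for $\alpha < \kappa$ by transfinite recursion: having set $V_\alpha = \{v_\beta : \beta < \alpha\}$, if some $w \in V(G) \setminus V_\alpha$ satisfies $|N(w) \cap V_\alpha| < \mu$, let $v_\alpha$ be any such~$w$; otherwise halt. If the recursion reaches every stage $\alpha < \kappa$, then $(v_\alpha)_{\alpha < \kappa}$ is an injection from $\kappa$ into $V(G)$ whose image has cardinality $\kappa = |V(G)|$, so it enumerates all of $V(G)$. The resulting enumeration is a good well-ordering of length~$\kappa$ witnessing $\col(G) \le \mu$, i.e.\ outcome~(iii).

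Suppose instead the recursion halts at some $\alpha < \kappa$. Then every vertex of $V(G) \setminus V_\alpha$ has at least $\mu$ neighbours in $V_\alpha$. Since $|V_\alpha| = |\alpha| < \kappa$ and $\kappa$ is regular with $|V(G)| = \kappa$, the complement $V(G) \setminus V_\alpha$ has cardinality $\kappa > |V_\alpha|$. Therefore the bipartite subgraph of $G$ with bipartition $(V_\alpha, V(G) \setminus V_\alpha)$ and all $G$-edges across the partition is a $\mu$-barricade, and Lemma~\ref{barricade} delivers a $\mu$-obstruction of type~I inside~$G$, which is outcome~(ii).

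There is no serious obstacle in the regular case beyond verifying that the two outcomes above are exhaustive; in particular outcome~(i) plays no role here. The real difficulty in the overall proof of Theorem~\ref{dicho} lies in the singular case, where the dichotomy between a successful run of the recursion and a failure stage is no longer organised by~$\kappa$ itself in a way that keeps track of the required length; that is precisely where outcome~(i), allowing descent to a subgraph of strictly smaller cardinality, becomes essential, and this is the content of Section~\ref{sec:sing}.
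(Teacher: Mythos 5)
The central claim of your argument, that if the greedy recursion reaches every stage $\alpha < \kappa$ then the sequence $(v_\alpha)_{\alpha < \kappa}$ enumerates all of $V(G)$, is false. An injection from $\kappa$ into a set of cardinality $\kappa$ need not be onto, and here it can genuinely fail to be: the recursion is free at every stage to keep choosing vertices that dodge some fixed vertex forever, and then it runs out of stages while leaving that vertex (and possibly many others) unplaced. The sequence you get in that case is not a well-ordering of $V(G)$ at all, so it witnesses nothing.

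Here is a concrete instance on which the greedy run breaks. Take $\mu=\omega$, $\kappa=\omega_1$, and let $G$ have vertex set $\omega_1$; for each limit ordinal $\alpha<\omega_1$ fix a strictly increasing cofinal sequence $\langle\alpha_n\,|\,n<\omega\rangle$ and put an edge between $\alpha$ and each $\alpha_n$, with no other edges. Then $T_G$ contains every limit ordinal, so $G$ is an $\omega$-obstruction of type~II and $\col(G)>\omega$. However, every successor ordinal has all its neighbours among the limit ordinals, so the greedy recursion may simply enumerate the successor ordinals in increasing order: at every stage the chosen vertex has zero back-neighbours, the recursion never halts, and after $\omega_1$ stages it has placed $\omega_1$ vertices but not all of them. (Even the ``least available vertex'' rule fails: at stage $\omega$ the vertex $\omega$ already has infinitely many neighbours in $V_\omega=\omega$, so it is skipped, and likewise every later limit.) No barricade is produced either, and indeed this $G$ has no $\mu$-barricade as a subgraph, so no amount of cleverness in choosing where to declare a halt could extract a type~I obstruction from it.

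This is exactly why type~II obstructions are in the theorem: a halting greedy enumeration always yields a barricade and hence a type~I obstruction, so a proof that relies only on that dichotomy secretly asserts that a type~I obstruction is always present, which is wrong. The paper's proof for regular $\kappa$ is correspondingly more delicate: it fixes the identity enumeration of $\kappa$, considers the set $T$ of ordinals $\alpha$ for which some $\beta\ge\alpha$ has at least $\mu$ neighbours below $\alpha$, and splits on whether $T$ is stationary. When $T$ avoids a club, $G$ decomposes along a continuous increasing sequence into blocks that can be ordered independently (giving outcomes~(i) or~(iii)); when $T$ is stationary, a pressing-down style rearrangement exhibits $G$ as a $\mu$-ladder, and Lemma~\ref{ladder-enough} (which itself splits between a barricade and a genuine type~II obstruction, again via Fodor) finishes. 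Your treatment of the degenerate case $\kappa\le\mu$ and your extraction of a barricade from a genuine halt at a stage $\alpha$ with $|\alpha|<\kappa$ are both fine, but the heart of the argument is missing.
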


Suppose for a moment that we know this. To deduce Theorem~\ref{col-char} 
we consider any graph with $\col(G)>\mu$. 
Let $G^*$ be subgraph of $G$ with $\col(G^*)>\mu$ and subject to this 
with $|V(G^*)|$ as small as possible. 
Then $G^*$ is still infinite and when we apply Theorem~\ref{dicho} to~$G^*$ the first and 
third outcome are impossible, so the second one most occur. Thus~$G^*$ and hence~$G$ 
contains a $\mu$-obstruction, as desired. 
To obtain Theorem~\ref{gwo} we apply Theorem~\ref{dicho} to $G$ with~$\mu=\col(G)$.

The proof of Theorem~\ref{dicho} itself is divided into two cases according to whether 
$\kappa$ is regular or singular. The former case will be treated immediately and the latter 
case is deferred to the next section.

\begin{proof}[Proof of Theorem~\ref{dicho} when $\kappa$ is regular] 
	Let $V(G)=\kappa$ and consider the set
	\[
		T=\{\alpha<\kappa\colon \text{some $\beta\ge\alpha$ has at least $\mu$ 
			neighbours in $\alpha$}\}\,.
	\]

	\smallskip

	{\it \hskip1cm First Case: $T$ is not stationary in $\kappa$.}
	
	\smallskip

	We observe that $0\not\in T$. 
	Let $\langle \delta_i\,|\,i<\kappa\rangle$ be a strictly increasing continuous sequence 
	of ordinals with limit $\kappa$ starting with $\delta_0=0$ and such that~$\delta_i\not\in T$ 
	holds for all $i<\kappa$. 
	Now if for some~$i<\kappa$ the restriction $G_i$ of $G$ to the half-open interval 
	$[\delta_i, \delta_{i+1})$ has colouring number~$>\mu$, then the first alternative holds. 
	Otherwise we may fix for each $i<\kappa$ a well-ordering $<_i$ of~$V(G_i)$ that exemplifies 
	$\col(G_i)\le\mu$. The concatenation $<^*$ of all these well-orderings has length $\kappa$, 
	so it suffices to verify that it demonstrates $\col(G)\le\mu$. 

	To this end, we consider any vertex $x$ of $G$. Let $i<\kappa$ be the ordinal with 
	$x\in G_i$. The neighbours of $x$ preceding it in the sense of $<^*$ are either in 
	$\delta_i$ or they belong to $G_i$ and precede $x$ under $<_i$. 
	Since $x\ge\delta_i$ and $\delta_i\not\in T$, there are less than $\mu$ neighbours of $x$ 
	in $\delta_i$. Also, by our choice of $<_i$, there are less than $\mu$ such neighbours 
	in $G_i$. 

	\smallskip

	{\it \hskip1cm Second Case: $T$ is stationary in $\kappa$.}

	\smallskip

	Let us fix for each $\alpha\in T$ an ordinal $\beta_\alpha\ge\alpha$ with 
	$|N(\beta_\alpha)\cap\alpha|\ge\mu$. A standard argument shows that the set
	\[
		E=\{\delta<\kappa\colon\text{if $\alpha\in T\cap\delta$, then $\beta_\alpha<\delta$}\}
	\]
	is club in $\kappa$. Thus $T\cap E$ is unbounded in $\kappa$. Let the sequence 
	$\langle \eta_i\,|\,i<\kappa\rangle$ enumerate the members of this set in increasing order. 
	Then for each $i<\kappa$ the ordinal $\xi_i=\beta_{\eta_i}$ is at least~$\eta_i$ and smaller 
	than $\eta_{i+1}$, because the latter ordinal belongs to $E$. In particular, each of the 
	equations $\eta_i=\xi_j$ and $\xi_i=\xi_j$ is only possible if $i=j$. 
	Thus it makes sense to define
	\[
		v_\alpha=\begin{cases}
			\alpha &\text{ if } \alpha\ne \eta_i, \xi_i \text{ for all } i<\kappa\,,\cr
			\xi_i  &\text{ if } \alpha= \eta_i \text{ for some } i<\kappa\,,\cr
			\eta_i &\text{ if } \alpha= \xi_i \text{ for some } i<\kappa\,.\cr
		\end{cases}
	\]
	The map $\pi$ sending each $\alpha<\kappa$ to $v_\alpha$ is a permutation of $\kappa$. 
	If $\alpha$ belongs to the stationary set $T\cap E$, then $v_\alpha=\xi_i$ for some 
	$i<\kappa$ and therefore $v_\alpha$ has at least $\mu$ neighbours in $\eta_i$ and all 
	of these are of the form $v_\beta$ with $\beta<\alpha$. So $\pi$ gives an isomorphism 
	between $G$ and a $\mu$-ladder, and in the light of Lemma~\ref{ladder-enough} we are done.
\end{proof}

\section{Singular \texorpdfstring{$\kappa$}{kappa}}
\label{sec:sing}

Next we consider the case that $\kappa$ is a singular cardinal. 
The form of our argument will be recognisable to anyone who is familiar with Shelah's 
singular compactness theorem (see for instance~\cite{Sh:52}). We will not, however, 
assume such familiarity. 

Throughout this section, sets of size at 
least $\mu$ will be referred to as {\em big} and sets 
of size less than~$\mu$ will be said to be {\em small}.
We will often consider $\subseteq$-increasing sequences ${\langle X_i\,|\,i < \gamma\rangle}$ 
of sets for which each $N_{X_i}(v)$ is small. In such cases we would like to conclude that 
also~$N_{\bigcup_{i < \gamma}X_i}(v)$ is small. 
We can do this as long as $\gamma$ and $\mu$ have 
different cofinalities. So we fix the notation~$\pomega$ for the rest of the argument to mean 
the least infinite cardinal whose cofinality is not equal to $\cf(\mu)$. Thus $\pomega$ is 
either $\omega$ or $\omega_1$.

\begin{dfn}
	A set $X$ of vertices of a graph $G$ is {\em robust} if for any $v \in V(G) \setminus X$ 
	the neighbourhood $N_X(v)$ is small.
\end{dfn}

\begin{remark}\label{pomegaseq}
	Let $\langle X_i\,|\,i < \pomega\rangle$ be a $\subseteq$-increasing sequence of robust sets. 
	Then $\bigcup_{i < \pomega} X_i$ is also robust.
\end{remark}

\begin{lem}\label{extlem}
	Let $G$ be a $\mu$-unobstructed graph and let $X$ be an uncountable set of vertices of $G$. 
	Then there is a robust set $Y$ of vertices of $G$ which includes $X$ and is of the same 
	cardinality.
\end{lem}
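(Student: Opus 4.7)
The plan is to build $Y$ by iteratively enlarging $X$, adjoining at each stage every outside vertex that already has a big neighbourhood in the current set, and running this recursion for exactly $\pomega$ steps. Writing $B(Z) := \{v \in V(G) \setminus Z : |N_Z(v)| \geq \mu\}$, the first task is to deduce from $\mu$-unobstructedness that $|B(Z)| \leq |Z|$ for every set $Z$ of vertices. The case $|Z| < \mu$ is vacuous, since then $B(Z)$ is empty. Otherwise, were we to have $|B(Z)| > |Z|$, then the bipartite subgraph of $G$ with parts $Z$ and $B(Z)$ (retaining only the edges between them) would be a $\mu$-barricade, and Lemma~\ref{barricade} would produce a $\mu$-obstruction of type~I inside $G$, contradicting the hypothesis.

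Next I would set $Y_0 = X$, $Y_{i+1} = Y_i \cup B(Y_i)$ for each $i < \pomega$, and $Y_\alpha = \bigcup_{i<\alpha} Y_i$ at limit stages $\alpha \leq \pomega$. The first step gives $|Y_{i+1}| \leq |Y_i|$ at each successor, and the uncountability of $X$ ensures $\pomega \leq \omega_1 \leq |X|$, so a transfinite induction (trivially at successors, and using $|Y_\alpha| \leq \alpha \cdot |X| = |X|$ at limits) yields $|Y_i| = |X|$ for every $i \leq \pomega$. Taking $Y := Y_\pomega$ then gives $X \subseteq Y$ with $|Y| = |X|$.

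It remains to verify that $Y$ is robust. Given $v \notin Y$, we have $v \notin B(Y_i)$ for each $i < \pomega$, hence $|N_{Y_i}(v)| < \mu$. The sets $N_{Y_i}(v)$ form a $\subseteq$-increasing sequence whose union is $N_Y(v)$, and by construction $\cf(\pomega) \neq \cf(\mu)$. The general principle noted in the paragraph preceding Remark~\ref{pomegaseq} then tells us that a $\subseteq$-increasing union of length $\pomega$ of small sets is itself small, so $|N_Y(v)| < \mu$, as required.

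The main obstacle is the limit step, which is precisely what motivates the careful choice of $\pomega$. We need the iteration length to be short enough that $|Y_\pomega| = |X|$ (which requires $\pomega \leq |X|$, and this is exactly why the lemma assumes $X$ is uncountable) and long enough that the smallness of $N_{Y_i}(v)$ survives passage to the union (which requires $\cf(\pomega) \neq \cf(\mu)$). The definition of $\pomega$ and the uncountability hypothesis arrange both constraints simultaneously.
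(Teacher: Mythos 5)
Your proof is correct and follows essentially the same route as the paper: iterate the closure operator (adjoining every outside vertex with a big neighbourhood) for $\pomega$ steps, control the cardinality at each stage via \autoref{barricade}, and obtain robustness of the union from the cofinality mismatch between $\pomega$ and $\mu$. The only cosmetic difference is that you isolate the bound $|B(Z)| \le |Z|$ as a standalone claim rather than invoking the barricade argument inline at the successor step.
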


\begin{proof}
	Let $\lambda$ be the cardinality of $X$. We build a $\subseteq$-increasing sequence 
	$\langle X_i\,|\,i < \pomega\rangle$ of sets recursively by letting $X_0 = X$, 
	taking $X_{i+1} = X_i \cup \{v \in V(G)\colon N_{X_i}(v) \text{ is big}\}$ in the 
	successor step and $X_\ell = \bigcup_{i < \ell} X_i$ for $\ell$ a limit ordinal. 
	Finally we set $Y = \bigcup_{i < \pomega} X_i$. 
	Since by construction $Y$ is robust and includes $X$, it remains to prove that 
	$|Y| = \lambda$. 

	To do this, we prove by induction on $i$ that each $X_i$ is of size $\lambda$. 
	The cases where $i$ is 0 or a limit are clear, so suppose $i = j + 1$. 
	By the induction hypothesis, $|X_j| = \lambda$. If $|X_{j+1}|$ were greater than $\lambda$, 
	then the induced bipartite subgraph on $(X_j, X_{j+1}\sm X_j)$ would be a $\mu$-barricade, 
	which is impossible by Lemma~\ref{barricade}. Thus $|X_{j+1}| = \lambda$, as required.
\end{proof}

\begin{remark}
	Lemma~\ref{extlem} also holds when $X$ is countably infinite, but the proof is more 
	involved and so we have omitted it (unlike in the above proof, we need that there 
	are no type~II obstructions).
\end{remark}

\begin{proof}[Proof of Theorem~\ref{dicho} when $\kappa$ is singular] 
	If $G$ is $\mu$-obstructed then we are done, so we suppose that it is not.
	Let us fix any bijective enumeration $\langle v_i\,|\,i < \kappa\rangle$ of the set of 
	vertices and a continuous increasing sequence $\langle \kappa_i\,|\,i < \cf(\kappa)\rangle$
	of cardinals with limit $\kappa$, where $\kappa_0 > \cf(\kappa)$ is uncountable. 

	We begin by building a family $\langle X_{i,j}\,|\,i < \cf(\kappa), j<\pomega\rangle$ of 
	robust sets of vertices of $G$, with~$X_{i,j}$ of size $\kappa_i$. This will be done 
	by nested recursion on $i$ and $j$. When we come to choose $X_{i,j}$, we will already have 
	chosen all $X_{i',j'}$ with $j' < j$ or with both $j' = j$ and $i' < i$. 
	Whenever we have just selected such a set $X_{i,j}$, we fix immediately an arbitrary 
	enumeration $\langle x_{i,j}^k\,|\,k < \kappa_i\rangle$ of this set. We impose the 
	following conditions on this construction:

	\begin{enumerate}[label=\nlabel]
		\item\label{it:1} $\{v_k\colon k < \kappa_{i}\}\subseteq X_{i,0}$ for 
			all $i < \cf(\kappa)$.
		\item\label{it:2} $\bigcup_{i' \leq i, j' \leq j} X_{i',j'} \subseteq X_{i,j}$ for 
			all $i < \cf(\kappa)$ and $j<\pomega$.
		\item\label{it:3} $\{x_{i', j}^k\colon k < \kappa_i\}\subseteq X_{i,j+1}$ for all 
			$i< i' < \cf(\kappa)$ and $j<\pomega$.
	\end{enumerate}

	These three conditions specify some collection of $\kappa_i$-many vertices which must appear 
	in~$X_{i,j}$. By Lemma~\ref{extlem} we can extend this collection to a robust set of the same 
	size and we take such a set as $X_{i,j}$. This completes the description of our recursive 
	construction.
 
	The purpose of condition~\ref{it:3} is to ensure that we have
	\begin{enumerate}[label=\nlabel, resume]
		\item\label{it:4} $X_{\ell, j}\subseteq \bigcup_{i<\ell} X_{i, j+1}$ 
			whenever $\ell<\cf(\kappa)$ 
			is a limit ordinal and $j<\pomega$.
	\end{enumerate}

	Indeed, for any $x \in X_{\ell, j}$ there is some index $k<\kappa_\ell$ with 
	$x=x_{\ell, j}^k$, owing to the continuity of the $\kappa_i$ there is some 
	ordinal $i < \ell$ with $k < \kappa_i$, and condition~\ref{it:3} yields 
	$x\in X_{i, j+1}$ for any such $i$. 

	Now for $i < \cf(\kappa)$ the set $X_i = \bigcup_{j < \pomega}X_{i,j}$ is robust 
	by Remark~\ref{pomegaseq}. We claim that for any limit ordinal $\ell<\cf(\kappa)$ 
	we have $X_\ell = \bigcup_{i < \ell}X_i$. 
	That each $X_i$ with $i < \ell$ is a subset of~$X_\ell$ is clear by 
	condition~\ref{it:2} above. 
	The other inclusion follows by taking the union over all $j<\pomega$ in~\ref{it:4}.
 
	Each vertex must lie in some set $X_i$ by condition~\ref{it:1} above, and it follows 
	from what we have just shown that the least such $i$ can never be a limit. 
	That is, $X_0$ together with all the sets~$X_{i+1} \setminus X_i$ gives a partition 
	of the vertex set. If the induced subgraph of $G$ on any of these sets has colouring 
	number $>\mu$, then the first alternative of Theorem~\ref{dicho} holds. 
	Otherwise all of these induced subgraphs have good well-orderings. 
	Since each $X_i$ is robust, the well-ordering obtained by concatenating all of these 
	well-orderings is also good, so that the third alternative of Theorem~\ref{dicho} holds.
\end{proof}

\section{A necessary condition}\label{sec:nec}

In this section we show that we can now easily deduce Theorem~\ref{strong-Komjath}. 
We shall rely on the following result of Dushnik, Erd\H{o}s, and Miller from~\cite{DEM}.

\begin{thm}
	For each infinite cardinal $\lambda$ we have 
	$\lambda\longrightarrow(\lambda, \omega)$. 
	This means that if the edges of a complete graph on $\lambda$ vertices are coloured 
	red and green, then there is either a red clique of order $\lambda$, or a green clique 
	of order $\omega$.
\end{thm}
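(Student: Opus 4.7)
The plan is to show that, given a red/green two-colouring of the edges of $K_\lambda$ with no green $K_\omega$, a red $K_\lambda$ must exist. For a vertex $v$ and a set $X$ of vertices, let $N^g_X(v) = \{u \in X : uv \text{ is green}\}$. I will treat the regular case in detail, which already subsumes the base case $\lambda = \omega$ via Ramsey's theorem, and indicate at the end how the singular case is handled.

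The central tool is a greedy step applied to any $X \subseteq V(K_\lambda)$ with $|X| = \lambda$. Fix a well-ordering of $X$ of order type $\lambda$, process its vertices in order, and add the current vertex to an initially empty set $C$ precisely when it is red-adjacent to every element already placed in $C$. The resulting $C$ is a red clique in $X$, and every $x \in X \setminus C$ is green-adjacent to some element of $C$, so $X \setminus C \subseteq \bigcup_{c \in C} N^g_X(c)$. If $|C| = \lambda$, we are done. Otherwise $|C| < \lambda$ forces $|X \setminus C| = \lambda$, and the regularity of $\lambda$ then forces some $c \in C$ with $|N^g_X(c)| = \lambda$: since $\lambda$ is regular, a union of fewer than $\lambda$ sets each of size strictly less than $\lambda$ has size less than $\lambda$.

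With this greedy step in hand, I would iterate. Set $X_0 := V(K_\lambda)$; having defined $X_n$ of size $\lambda$, apply the greedy step inside $X_n$. Either it produces a red $K_\lambda$ and we are done, or it produces a vertex $c_n \in X_n$ with $|N^g_{X_n}(c_n)| = \lambda$, in which case we set $X_{n+1} := N^g_{X_n}(c_n)$. By construction $X_{n+1} \subseteq N^g_{V(K_\lambda)}(c_i)$ for every $i \le n$, so all edges $c_i c_j$ with $i < j$ are green. Should the iteration never halt, $\{c_n : n < \omega\}$ would be a green $K_\omega$, contradicting the hypothesis; hence it halts at some finite stage, and its terminal greedy step delivers the red $K_\lambda$ we sought.

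The main obstacle is the singular case: when $\lambda$ is singular, the supremum of $|N^g_X(c)|$ over $c \in C$ may equal $\lambda$ without being attained, so the choice of $c_n$ above can fail. This is resolved by invoking the regular case on the cardinals $\lambda_n$ of a cofinal regular sequence in $\lambda$ and combining the resulting red cliques into a red $K_\lambda$ by a further transfinite construction. The bookkeeping for this reduction is the most delicate part of the write-up, but it introduces no new ideas beyond the regular case above.
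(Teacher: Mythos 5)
Your argument for the regular case is correct and complete: the greedy extraction of a red clique $C$ from $X$, the covering $X \setminus C \subseteq \bigcup_{c\in C} N^g_X(c)$, the use of regularity to find some $c$ with $|N^g_X(c)|=\lambda$, and the observation that a non-terminating iteration produces a green $K_\omega$ constitute the standard proof of $\lambda\longrightarrow(\lambda,\omega)$ for regular $\lambda$. (For comparison, the paper itself offers no proof of this statement --- it is quoted from Dushnik and Miller --- so your write-up has to stand entirely on its own.)

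It does not stand on its own, because of the singular case, which you dismiss as bookkeeping that ``introduces no new ideas beyond the regular case''. That is exactly where the real difficulty of the theorem lies; historically it is the half supplied by Erd\H{o}s. The obvious combination scheme fails: having produced a red clique $C_i$ of size $\lambda_i$ by the regular case, the set of vertices red-adjacent to \emph{all} of $C_i$ --- the reservoir in which the next clique would have to live --- can be small or even empty, since a single vertex may have $\lambda$ green-neighbours without this creating any green clique whatsoever. Nor can you repair this by insisting that the vertices of $C_i$ have small green-degree: if the green graph is complete bipartite with both sides of size $\lambda$, there is no green triangle at all, yet every vertex has green-degree $\lambda$; the red $K_\lambda$ is found only by the global choice of working inside one side of the bipartition, and nothing in your sketch makes such a choice. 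Finally, even when every green-degree encountered is $<\lambda$, a union of $\lambda_i \geq \cf(\lambda)$ many such neighbourhoods can still have size $\lambda$ precisely because $\lambda$ is singular, so the reservoir is not preserved by cardinality bookkeeping alone. A correct treatment of the singular case requires a genuinely new idea (see the original Dushnik--Miller paper or the account in Erd\H{o}s--Hajnal--M\'at\'e--Rado), and as written your proposal has a real gap there.
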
    

By restricting the attention to the red graph, one realises that this means that every 
infinite graph $G$ either contains a clique of order $|V(G)|$ or an infinite independent set. 
When used in this formulation, we refer to the above theorem as DEM.

\begin{proof}[Proof of Theorem~\ref{strong-Komjath}]\label{Kom}
	By Theorem~\ref{col-char} it remains to show that every graph with an obstruction 
	of type~I or~II has a $K_\mu$ subgraph or an induced $K_{k,\omega}$.

	First we check this for obstructions $(A,B)$ of type~I.
	By DEM, we may assume that the neighbourhood $N(b)$ of every $b\in B$ contains an 
	independent set $Y_b$ of size $k$.
	Let~$f$ be the function mapping $b$ to $Y_b$.
	There must be a $k$-element subset $Y$ of $A$ such that~${|f^{-1}[Y]|=|B|}$.
	By~DEM again, we may assume that $f^{-1}[Y]$ contains an infinite independent set $B'$.
	Then~$G[B'\cup Y]$ is isomorphic to  $K_{k,\omega}$.

	Hence it remains to show that every obstruction $G$ of type II has a~$K_\mu$ subgraph 
	or an induced~$K_{k,\omega}$.  
	For every $\alpha\in T_G$, we may assume by DEM that $N(\alpha)\cap\alpha$ contains an 
	independent set $Y_\alpha$ of size $k$.
	For each $i$ with $1\leq i\leq k$, let $f_i\colon T \to \kappa$ be the function mapping 
	$\alpha$ to the $i$-th smallest element of $Y_\alpha$.
	By Fodor's Lemma, there is some stationary $T'\subseteq T_G$ at which $f_1$ is constant, 
	and some stationary $T''\subseteq T'$ at which $f_2$ is constant.
	Proceeding like this, we find some stationary $S\subseteq T_G$ at which all the $f_i$ are 
	constant. 
	Let~$X$ be the set of these $k$ constants.
	By DEM, we may assume that $S$ contains a countably infinite independent set $I$.
	Then $G[X\cup I]$ is isomorphic to  $K_{k,\omega}$.
\end{proof}

In the following example, we show that if we replace `$K_{k,\omega}$' 
by `$K_{\omega,\omega}$' in Theorem~\ref{strong-Komjath}, then it becomes false.

\begin{eg}\label{no_KOmegaOmega}
	Let $A$ be the set of finite 0-1-sequences, and let $B$ be the set of 0-1-sequences
	with length~$\omega$.
	We define a bipartite graph $G$ with vertex set $A\cup B$ by adding for each 
	$a\in A$ and $b\in B$ the edge $ab$ if $a$ is an initial segment of $b$. 
	Since $G$ is bipartite, it cannot contain a $K_\omega$. It cannot contain a 
	$K_{\omega,\omega}$ either, since any two vertices in $B$ have only finitely 
	many neighbours in common. 
	On the other hand, $\col(G)>\aleph_0$, since $G$ is an $\aleph_0$-barricade. 
\end{eg}

\begin{remark}
	The proof of Theorem~\ref{strong-Komjath} actually shows something slightly stronger: 
	in order to have $\col(G)\leq \mu$ it is enough to forbid $K_\mu$ and a 
	$K_{k,\mu^+}$-subgraph where the $k$ vertices on the left are independent. 
	If $\mu=\omega$, then DEM implies it is enough to forbid $K_\mu$ and an \emph{induced} 
	$K_{k,\mu^+}$. 
	On the other hand if $\kappa=2^\omega$ and $\mu=\omega_1$, it may happen that the 
	bipartite graph contains neither a $K_\mu$ nor an induced $K_{k,\omega_1}$ 
	by Sierpi\'nski's theorem from~\cite{Sierp33}, which says that
	\[
		2^{\omega}\nlra (\omega_1)^2_2\,.
	\]
\end{remark}

Our characterisation simplifies the study of many questions about colouring numbers, 
since they can often be reduced to questions about the properties of our obstructions. 
However there are some cases where our results do not appear to be helpful. 
For example, Halin showed in~\cite{Halinsbook} that if $\lambda$ is infinite and a graph 
$G$ has colouring number greater than $\lambda$, then $G$ includes a subdivision of 
$K_{\lambda}$. But this result is more closely tied to the structure of graphs with 
no subdivision of $K_{\lambda}$ than of those with colouring number less than $\lambda$, 
and our methods appear not to provide a simplification of the proof.

\begin{bibdiv}
\begin{biblist}

\bib{DEM}{article}{
   author={Dushnik, Ben},
   author={Miller, E. W.},
   title={Partially ordered sets},
   journal={Amer. J. Math.},
   volume={63},
   date={1941},
   pages={600--610},
   issn={0002-9327},
   review={\MR{0004862}},
   doi={10.2307/2371374},
}

\bib{DefOfColNo}{article}{
   author={Erd\H os, P.},
   author={Hajnal, A.},
   title={On chromatic number of graphs and set-systems},
   journal={Acta Math. Acad. Sci. Hungar},
   volume={17},
   date={1966},
   pages={61--99},
   issn={0001-5954},
   review={\MR{0193025}},
   doi={10.1007/BF02020444},
}

\bib{Halinsbook}{book}{
   author={Halin, Rudolf},
   title={Graphentheorie},
   language={German},
   edition={2},
   publisher={Wissenschaftliche Buchgesellschaft, Darmstadt},
   date={1989},
   pages={322},
   isbn={3-534-10140-5},
   review={\MR{1068314}},
}

\bib{Kom}{article}{
   author={Komj\'ath, P\'eter},
   title={A note on uncountable chordal graphs},
   journal={Discrete Math.},
   volume={338},
   date={2015},
   number={9},
   pages={1565--1566},
   issn={0012-365X},
   review={\MR{3345591}},
   doi={10.1016/j.disc.2015.03.022},
}

\bib{KomBook}{book}{
	author={Komj\'ath, P\'eter},
	title={Infinite graphs},
	note={Research Monograph. In Preparation.}	
}
\bib{Kunen}{book}{
   author={Kunen, Kenneth},
   title={Set theory},
   series={Studies in Logic (London)},
   volume={34},
   publisher={College Publications, London},
   date={2011},
   pages={viii+401},
   isbn={978-1-84890-050-9},
   review={\MR{2905394}},
}

\bib{Sh:52}{article}{
   author={Shelah, Saharon},
   title={A compactness theorem for singular cardinals, free algebras,
   Whitehead problem and transversals},
   journal={Israel J. Math.},
   volume={21},
   date={1975},
   number={4},
   pages={319--349},
   issn={0021-2172},
   review={\MR{0389579}},
   doi={10.1007/BF02757993},
}

\bib{Sierp33}{article}{
   author={Sierpi\'nski, Waclaw},
   title={Sur un probl\`eme de la th\'eorie des relations},
   language={French},
   journal={Ann. Scuola Norm. Sup. Pisa Cl. Sci. (2)},
   volume={2},
   date={1933},
   number={3},
   pages={285--287},
   issn={0391-173X},
   review={\MR{1556708}},
}

\end{biblist}
\end{bibdiv}
\end{document}